        \definecolor{brown}{rgb}{1,0,1}
\numberwithin{equation}{section}
\newtheorem{theo}{Theorem}[section]
\theoremstyle{definition}
\theoremstyle{remark}
\newtheorem{rema}[theo]{Remark}
\newcommand{\N}{{\mathbb N}}
\newcommand{\nwc}{\newcommand}
\nwc{\eps}{\epsilon}
\nwc{\vareps}{\varepsilon}
\nwc{\Oph}{\operatorname{Op}_\hbar}
\nwc{\ra}{\rangle}
\nwc{\la}{\lambda}
\nwc{\mf}{\mathbf} 
\nwc{\blds}{\boldsymbol} 
\nwc{\ml}{\mathcal} 
\nwc{\defeq}{\stackrel{\rm{def}}{=}}
\nwc{\cE}{\ml{E}}
\nwc{\cN}{\ml{N}}
\nwc{\cO}{\ml{O}}
\nwc{\cP}{\ml{P}}
\nwc{\cU}{\ml{U}}
\nwc{\cV}{\ml{V}}
\nwc{\cW}{\ml{W}}
\nwc{\tU}{\widetilde{U}}
\nwc{\IN}{\mathbb{N}}
\nwc{\IR}{\mathbb{R}}
\nwc{\IZ}{\mathbb{Z}}
\nwc{\IC}{\mathbb{C}}
\nwc{\IT}{\mathbb{T}}
\nwc{\tP}{\widetilde{P}}
\nwc{\tPi}{\widetilde{\Pi}}
\nwc{\tV}{\widetilde{V}}
\nwc{\supp}{\operatorname{supp}}
\nwc{\rest}{\restriction}
\renewcommand{\phi}{\varphi}
\newcommand{\ep}{\varepsilon}
\newtheorem{cor}[theo]{{\sc Corollary}}
\newtheorem{lem}[theo]{{\sc Lemma}}
\title [Inner radius of nodal domains of quantum ergodic eigenfunctions] {Inner radius of nodal domains of quantum ergodic eigenfunctions }
\author{Hamid Hezari }
\address{Department of Mathematics, UC Irvine, Irvine, CA 92617, USA} \email{hezari@math.uci.edu}
\begin{document}


\maketitle

\begin{abstract}
In this short note we show that the lower bounds of Mangoubi \cite{Ma2} on the inner radius of nodal domains can be improved for quantum ergodic sequences of eigenfunctions, according to a certain power of the radius of shrinking balls on which the eigenfunctions equidistribute. We prove such improvements using a quick application of our recent results \cite{He16}, which give modified growth estimates for eigenfunctions that equidistribute on small balls.  Since by \cite{Han15, HeRi} small scale QE holds for negatively curved manifolds on logarithmically shrinking balls, we get logarithmic improvements on the inner radius of eigenfunctions on such manifolds. We also get improvements for manifolds with ergodic geodesic flows. In addition using the small scale equidistribution results of \cite{LeRu},  one gets polynomial betterments of \cite{Ma2} for toral eigenfunctions in dimensions $n \geq 3$. The results work only for a full density subsequence of eigenfunctions.

\end{abstract}

\section{Introduction} Let $(X,g)$ be a smooth compact connected boundaryless Riemannian manifold of dimension $n$.  Suppose $\Delta_g$ is the positive Laplace-Beltrami operator on $(X, g)$ and $\psi_\lambda$ is a real-valued eigenfunction of $\Delta_g$ with eigenvalue $\lambda >0$. Let $\Omega_\lambda$ be a nodal domain of $\psi_\lambda$ and $\text{in}(\Omega_\lambda)$ be its inner radius. Mangoubi \cite{Ma1, Ma2} has shown that \footnote{In particular in dimension two, one has the optimal lower bound $a_1 \lambda^{-1/2}$.} 
\begin{equation} \label{Bounds}{a_1}{ \la^{-\frac{1}{2}-\frac{(n-1)(n-2)}{4n} }} \leq \text{in}( \Omega_\la ) \leq {a_2} {\la^{-\frac{1}{2}}}, \end{equation}
where $a_1$ and $a_2$ depend only on $(X,g)$. In this note we show that
\begin{theo} \label{LowerBoundIR} There exists $r_0(g) >0$ such that if $ \lambda^{-1/2} < r_0(g)$, and if for some $r \in [\lambda^{-1/2}, r_0(g)]$ and for all geodesic balls $\{B_{r}(x)\}_{x \in X}$ we have
\begin{equation} \label{SmallScaleAssumption}
K_1 r^n\leq \int _{B_{r}(x)} | \psi_{\lambda}|^2 dv_g \leq K_2 r^n,
\end{equation} for some positive  constants $K_1$ and $K_2$ independent of $x$, 
then for $n \geq 3$
\begin{equation}\label{RefinedIR} {a_1} r^{-\frac{(n-1)(n-2)}{2n}}{ \la^{-\frac{1}{2}-\frac{(n-1)(n-2)}{4n} }} \leq \text{in}( \Omega_\la ) \end{equation}
\end{theo}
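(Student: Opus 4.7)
The plan is to revisit Mangoubi's proof of the universal lower bound in \eqref{Bounds} and to replace, at exactly the step where a universal doubling exponent of order $\sqrt{\lambda}$ enters, the refined growth exponent supplied by \cite{He16} under the small-scale equidistribution hypothesis \eqref{SmallScaleAssumption}. In its most transparent form, the argument in \cite{Ma2} produces an inequality of the schematic type
\[
\text{in}(\Omega_\lambda) \;\geq\; c\, \lambda^{-1/2}\, \beta^{-\frac{(n-1)(n-2)}{2n}},
\]
where $\beta$ denotes a doubling/frequency index of $\psi_\lambda$ on a ball surrounding a point of the nodal domain. Feeding in the Donnelly--Fefferman type bound $\beta \lesssim \sqrt{\lambda}$ reproduces exactly the loss factor $\lambda^{-(n-1)(n-2)/(4n)}$ in \eqref{Bounds}, so any improvement of $\beta$ transfers directly to an improvement of the inner-radius bound.

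First I would fix a nodal domain $\Omega_\lambda$ and a base point $x_0 \in \Omega_\lambda$ at which Mangoubi's local analysis is carried out, and confine the whole argument to the ball $B_r(x_0)$ whose radius equals the equidistribution scale in \eqref{SmallScaleAssumption}. Rescaling $B_r(x_0)$ to unit size turns $\psi_\lambda$ into an eigenfunction of a rescaled elliptic operator with effective eigenvalue $r^2\lambda$, so the natural expectation for the doubling exponent of the rescaled function is $\sqrt{r^2\lambda} = r\sqrt{\lambda}$ rather than $\sqrt{\lambda}$. The content of \cite{He16} is precisely that, under \eqref{SmallScaleAssumption}, this heuristic can be realized: one obtains a doubling inequality on $B_r(x_0)$ with exponent $\lesssim r\sqrt{\lambda}$, which I would use as the new value of $\beta$ in Mangoubi's estimate.

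Substituting $\beta \lesssim r\sqrt{\lambda}$ then immediately gives
\[
\text{in}(\Omega_\lambda) \;\gtrsim\; \lambda^{-1/2} \bigl(r\sqrt{\lambda}\bigr)^{-\frac{(n-1)(n-2)}{2n}} \;=\; r^{-\frac{(n-1)(n-2)}{2n}}\, \lambda^{-\frac{1}{2}-\frac{(n-1)(n-2)}{4n}},
\]
which is exactly \eqref{RefinedIR}. The one place requiring careful verification is the compatibility between the scale on which \cite{He16} yields the refined doubling and the scale on which Mangoubi's harmonic-extension machinery operates; in particular, one must check that the passage between $L^2$ and $L^\infty$ versions of the doubling index, via standard elliptic estimates, does not consume the improvement from $\sqrt{\lambda}$ down to $r\sqrt{\lambda}$. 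Since $r \geq \lambda^{-1/2}$, the rescaled ball still contains many wavelengths, so this compatibility should hold; once checked, the proof reduces to a short plug-in, matching the ``quick application'' advertised in the abstract.
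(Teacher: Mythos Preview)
Your proposal is correct and follows essentially the same route as the paper: extract from \cite{Ma2} the inequality $\text{in}(\Omega_\lambda)\ge a_5\,\lambda^{-1/2}N(\lambda)^{-(n-1)(n-2)/(2n)}$, where $N(\lambda)$ is the sup of the doubling index over balls of radius $\le \ep_0\lambda^{-1/2}$, and then replace the Donnelly--Fefferman bound $N(\lambda)\lesssim\sqrt\lambda$ by the refined bound $N(\lambda)\lesssim r\sqrt\lambda$ from \cite{He16}. The scale compatibility you flag is handled exactly as you anticipate: since the relevant radii satisfy $s\le \ep_0\lambda^{-1/2}\le r<10r$, the doubling estimate of \cite{He16} applies directly at those scales, and no separate $L^2$--$L^\infty$ conversion is needed beyond what is already built into that result.
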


A result of \cite{HeRi}\footnote{In \cite{Han15}, this is proved for $\kappa \in (0, \frac{1}{2n})$.}, shows that on negatively curved manifolds, (\ref{SmallScaleAssumption}) holds for a full density subsequence  with $r= (\log \lambda)^{-\kappa}$ for any $\kappa \in (0, \frac{1}{2n})$. Hence the following unconditional result on such manifolds is quickly obtained.

\begin{cor}\label{LowerBound-NC}
Let $(X, g)$ be a boundaryless compact connected smooth Riemannian manifold of dimension $n \geq 3$, with negative sectional curvatures.  Let $\{ \psi_{\lambda_j} \}_{j \in \IN}$ be any ONB of $L^2(X)$ consisting of real-valued eigenfunctions of $\Delta_g$ with eigenvalues $\{\lambda_j\}_{j \in \IN}$. Let $\ep >0$ be arbitrary. Then there exists $S_\ep \subset \IN$ of \textit{full density} \footnote{It means that $ \lim_{N \to \infty} \frac{1}{N} \text{card} \big ( S \cap [1, N] \big )  =1$.} such that for $j \in S_\ep$:
$$ {a_1} (\log \la_j)^{\frac{(n-1)(n-2)}{4n^2} -\ep}{ \la_j^{-\frac{1}{2}-\frac{(n-1)(n-2)}{4n} }} \leq \text{in}( \Omega_\la ),$$ for some $a_1$ that only depends on $(X, g)$ and $\ep$. 
\end{cor}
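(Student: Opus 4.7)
The plan is to combine Theorem \ref{LowerBoundIR} directly with the logarithmic small-scale quantum ergodicity result of \cite{HeRi}. The crucial observation is that on a negatively curved manifold, for a full density subsequence, the hypothesis (\ref{SmallScaleAssumption}) is satisfied at scale $r = (\log \la)^{-\kappa}$ for any $\kappa \in (0, 1/(2n))$, so the task reduces to feeding this $r$ into (\ref{RefinedIR}) and tracking how the exponent of $\log \la$ depends on $\kappa$.

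First, given $\ep > 0$, I would choose $\kappa = \frac{1}{2n} - \de$ where $\de > 0$ is small enough that $\de \cdot \frac{(n-1)(n-2)}{2n} < \ep$. By \cite{HeRi}, for this $\kappa$ and for the given ONB $\{\psi_{\la_j}\}$, there exists a full density subset $S_\ep \subset \IN$ along which the two-sided bound (\ref{SmallScaleAssumption}) holds with $r_j := (\log \la_j)^{-\kappa}$ and fixed constants $K_1, K_2 > 0$ depending only on $g$ and $\kappa$ (equivalently, on $g$ and $\ep$).

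Second, I would verify that for $j$ sufficiently large, $r_j$ lies in the admissible range $[\la_j^{-1/2}, r_0(g)]$ from Theorem \ref{LowerBoundIR}. This is immediate, since logarithmic decay is much slower than any polynomial decay, so $(\log \la_j)^{-\kappa} \gg \la_j^{-1/2}$ for large $j$, and clearly $(\log \la_j)^{-\kappa} \to 0$, so the upper constraint is eventually satisfied. Removing a finite initial segment from $S_\ep$ preserves its full density.

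Third, substituting $r = r_j$ into (\ref{RefinedIR}) gives
$$ \text{in}(\Omega_{\la_j}) \geq a_1 (\log \la_j)^{\kappa \cdot \frac{(n-1)(n-2)}{2n}} \la_j^{-\frac{1}{2} - \frac{(n-1)(n-2)}{4n}},$$
and the exponent of $\log \la_j$ equals $\frac{(n-1)(n-2)}{4n^2} - \de \cdot \frac{(n-1)(n-2)}{2n}$, which is greater than $\frac{(n-1)(n-2)}{4n^2} - \ep$ by our choice of $\de$. Any finitely many initial $j$'s can be absorbed into the constant $a_1$. I do not expect a real obstacle: the corollary is essentially a two-line consequence of the theorem and the logarithmic small-scale QE, with the only subtlety being the quantitative bookkeeping that connects $\kappa$, via the factor $r^{-(n-1)(n-2)/(2n)}$, to the resulting power of $\log \la$.
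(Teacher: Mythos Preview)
Your proposal is correct and follows precisely the same route as the paper: the paper states just before the corollary that \cite{HeRi} gives (\ref{SmallScaleAssumption}) for a full density subsequence with $r=(\log\lambda)^{-\kappa}$ for any $\kappa\in(0,\tfrac{1}{2n})$, and that the corollary is then ``quickly obtained'' from Theorem~\ref{LowerBoundIR}. You have simply made the bookkeeping explicit by choosing $\kappa=\tfrac{1}{2n}-\de$ and tracking the resulting exponent of $\log\lambda$.
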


One can also get the following improvements for quantum ergodic sequences of eigenfunctions. In fact it is enough to assume equidistribution on the configuration space $X$ .  

\begin{cor} \label{LowerBoundQE} Let $(X, g)$ be a boundaryless compact connected smooth Riemannian manifold of dimension $n \geq 3 $.  Let $\{ \psi_{\lambda_j}\}_{j \in S}$ be a  sequence of  real-valued eigenfunctions of $\Delta_g$ with eigenvalues $\{\lambda_j\}_{j \in S}$ such that for all $r \in (0, R_0)$, for some fixed $R_0 >0$, and all $x \in X$
\begin{equation} \label{QEonX}
\int_{B_r(x)} |\psi_{\lambda_j}|^2 \to \frac{\text{Vol}_g(B_r(x))}{\text{Vol}_g(X)}, \qquad  \lambda_j \xrightarrow{j \in S} \infty.
\end{equation}
Then along this sequence $$ \lim_{ j \to \infty} \la_j^{\frac{1}{2}+\frac{(n-1)(n-2)}{4n}}  \text{in}(\Omega_{\la_j}) = \infty.$$  
\end{cor}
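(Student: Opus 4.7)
The strategy is to apply Theorem \ref{LowerBoundIR} along the sequence at a radius $r = r_j$ that is allowed to shrink with $j$. Once such an $r_j \to 0$ (with $r_j \in [\lambda_j^{-1/2}, r_0(g)]$) is found for which the hypothesis (\ref{SmallScaleAssumption}) holds at scale $r_j$ with constants $K_1,K_2$ independent of $j$, estimate (\ref{RefinedIR}) immediately yields
$$\lambda_j^{\frac{1}{2}+\frac{(n-1)(n-2)}{4n}}\,\text{in}(\Omega_{\lambda_j}) \;\geq\; a_1\, r_j^{-\frac{(n-1)(n-2)}{2n}},$$
whose right-hand side diverges as $r_j \to 0$, since the exponent $(n-1)(n-2)/(2n)$ is strictly positive for $n\geq 3$.

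The first step is to upgrade the pointwise statement (\ref{QEonX}) to a uniform-in-$x$ one at each fixed scale. For each $r \in (0, R_0)$ I would cover $X$ by finitely many balls $\{B_{r/4}(x_i)\}_{i=1}^{N_r}$; for $x \in B_{r/4}(x_i)$ the inclusions $B_{3r/4}(x_i) \subset B_r(x) \subset B_{5r/4}(x_i)$ sandwich $\int_{B_r(x)} |\psi_{\lambda_j}|^2$ between two integrals to which (\ref{QEonX}) applies. Combined with the uniform comparability $\text{Vol}_g(B_r(x)) \asymp r^n$ for small $r$, this produces a threshold $J(r)$ such that (\ref{SmallScaleAssumption}) holds at scale $r$ for all $j \geq J(r)$ and every $x \in X$, with constants $K_1,K_2$ depending only on $(X,g)$ (the loss from the $5/4$ vs.\ $3/4$ ratio just shifts these constants).

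A standard diagonal argument now produces $r_j$: choose $\rho_k \searrow 0$, set $J_k := J(\rho_k)$, and define $r_j := \rho_{k(j)}$ with $k(j) := \max\{k : J_k \leq j \text{ and } \rho_k \in [\lambda_j^{-1/2}, r_0(g)]\}$. Since each $J_k$ is finite and $\lambda_j^{-1/2} \to 0$, we have $k(j) \to \infty$, so $r_j \to 0$ while $r_j \in [\lambda_j^{-1/2}, r_0(g)]$ for $j$ large. Theorem \ref{LowerBoundIR} applied at $r = r_j$ then yields the divergence asserted in the corollary.

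The main obstacle is the first step: promoting pointwise equidistribution on $X$ to a uniform statement strong enough to be diagonalized down to a shrinking scale. The finite-cover sandwich above seems the cleanest route, exploiting the fact that only the values of the integrals at finitely many centers are needed for each fixed $r$. Everything after that is a mechanical application of Theorem \ref{LowerBoundIR}.
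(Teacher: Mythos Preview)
Your proof is correct and follows essentially the same route as the paper. The paper packages your first step as a separate lemma (Lemma~\ref{QElemma}), proved by the same finite-cover sandwich argument, and then says the corollary follows from that lemma together with Theorem~\ref{LowerBoundIR}; your explicit diagonal construction of $r_j$ is a slight elaboration of what the paper leaves implicit (one can equivalently just note that for each fixed small $r$ the bound eventually holds, so $\liminf_j \lambda_j^{\frac{1}{2}+\frac{(n-1)(n-2)}{4n}}\text{in}(\Omega_{\lambda_j}) \geq a_1 r^{-\frac{(n-1)(n-2)}{2n}}$ for every $r$, hence is $+\infty$).
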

In particular the above corollary holds for manifolds with ergodic geodesic flows by the quantum ergodicity theorem of \cite{Sh, CdV, Ze87}. This means that given any ONB of eigenfunctions on such a manifold one can find a full density subsequence where (\ref{QEonX}), hence Corollary \ref{LowerBoundQE} holds. 

We must also mention the work of Lester-Rudnick \cite{LeRu} where they proved that for a full density subsequence of toral eigenfunctions one has equidistribution  at the shrinking rate  $r= \lambda^{- \frac{1}{2n-2} +\epsilon}$. Of course, one can also use this and Theorem \ref{LowerBoundIR} to get improved lower bounds for such toral eigenfunctions.

\begin{cor} \label{Toral} For any ONB $\{\psi_{\lambda_j} \}_ {\j \in \N}$ of real-valued eigenfunctions of $\Delta$ on the flat torus $\mathbb T^{n \geq 3}$, and any $\ep >0$, there exists a full density subset $S_\ep \subset \N$ such that for $j \in S_\ep$
$${a_1} { \la_j^{-\frac{1}{2}-\frac{(n-2)^2}{4n} -\ep }} \leq \text{in}( \Omega_{\la_j} ),$$ where $a_1$ is a positive constant that depends only on $n$ and $\ep$. 
\end{cor}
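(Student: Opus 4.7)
The plan is to combine the small-scale equidistribution of Lester--Rudnick \cite{LeRu} with Theorem \ref{LowerBoundIR} by feeding the Lester--Rudnick shrinking rate into the lower bound \eqref{RefinedIR}. Fix a small $\ep' > 0$ to be chosen at the end. By \cite{LeRu}, there is a full density subset $S_{\ep'} \subset \N$ such that along $j \in S_{\ep'}$ the $L^2$-mass of $\psi_{\la_j}$ on geodesic balls of radius $r_j := \la_j^{-\frac{1}{2(n-1)} + \ep'}$ is comparable to $\mathrm{Vol}(B_{r_j})/\mathrm{Vol}(\mathbb T^n)$, uniformly in $x \in \mathbb T^n$. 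Since $\mathrm{Vol}(B_r) \asymp r^n$ on the flat torus, this is precisely the hypothesis \eqref{SmallScaleAssumption} of Theorem \ref{LowerBoundIR} with $r = r_j$ and constants $K_1, K_2$ depending only on $n$. Note also that $r_j \to 0$ and $r_j \gg \la_j^{-1/2}$, so $r_j$ lies in the admissible range $[\la_j^{-1/2}, r_0(g)]$ once $j$ is large enough.

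Next, I would apply Theorem \ref{LowerBoundIR} to each such $j \in S_{\ep'}$ (discarding finitely many, which does not affect density). Plugging $r = r_j$ into \eqref{RefinedIR}, the extra prefactor becomes
$$r_j^{-\frac{(n-1)(n-2)}{2n}} = \la_j^{\left(\frac{1}{2(n-1)} - \ep'\right)\frac{(n-1)(n-2)}{2n}} = \la_j^{\frac{n-2}{4n} - C_n \ep'}, \qquad C_n := \frac{(n-1)(n-2)}{2n}.$$
Combining this with the original $\la_j^{-1/2 - (n-1)(n-2)/(4n)}$ factor yields the total exponent
$$-\frac{1}{2} - \frac{(n-1)(n-2) - (n-2)}{4n} - C_n \ep' = -\frac{1}{2} - \frac{(n-2)^2}{4n} - C_n \ep',$$
which is the exponent claimed in Corollary \ref{Toral} after renaming $\ep := C_n \ep'$ at the outset.

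The only delicate point, and the step I would verify first, is that the Lester--Rudnick result provides constants $K_1, K_2$ in \eqref{SmallScaleAssumption} that are uniform in $x \in \mathbb T^n$; their statement is indeed uniform in $x$ (this is how equidistribution on configuration space is customarily formulated, and uniformity follows automatically since the limiting density $\mathrm{Vol}(B_r(x))/\mathrm{Vol}(\mathbb T^n)$ is translation-invariant). Once this uniformity is in hand the argument is entirely mechanical: a single application of Theorem \ref{LowerBoundIR} together with the two-line exponent computation above completes the proof.
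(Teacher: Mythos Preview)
Your proposal is correct and matches the paper's own approach exactly: the paper simply remarks that plugging the Lester--Rudnick shrinking rate $r=\lambda^{-\frac{1}{2(n-1)}+\ep}$ into Theorem \ref{LowerBoundIR} yields the stated bound, and you have carried out precisely this computation with the correct exponent arithmetic.
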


\subsection{Proofs of Theorem \ref{LowerBoundIR}} The main idea is to use the modified growth estimates of our recent preprint \cite{He16}. We recall that Donnelly-Fefferman \cite{DF1} showed that an eigenfunction $\psi_\lambda$ of $\Delta_g$ with eigenvalue $\lambda$ satisfies
$$ N(B_s(x)):= \log \left ( \frac{\sup_{B_{2s}(x)} |\psi_{\lambda}|^2} { \sup_{B_{s}(x)} |\psi_{\lambda}|^2}  \right ) \leq c \sqrt{\lambda},$$ for all $s < s_0$ where $s_0$ and $c$ depend only on $(X, g)$.  In \cite{He16}, we have shown that  
under the assumption
$$K_1 r^n\leq \int _{B_{r}(x)} | \psi_{j}|^2 \leq K_2 r^n, $$  we have
\begin{equation}\label{Doubling} N(B_s(x)) \leq c \, r \sqrt{\lambda}, \qquad \text{for all $s < 10 r$}, \end{equation} where $c$ is positive and is uniform in $x$, $r$, $s$, and $\lambda$, but depends on $K_1$, $K_2$, and $(X, g)$. We apply these growth estimates to the proof of \cite{Ma2}. 

We emphasize that  in what follows $n \geq 3$. In \cite{Ma2}, it is first proved that there exists $\ep_0 \in (0, 1)$, sufficiently small and only dependent on $(X, g)$, such that for all embedded geodesic balls $B_R(p)$ with $ \{ \psi_\la =0 \} \cap B_{R/2}(p) \neq \emptyset$ we have 
$$  \frac{ \text{Vol} \big ( \{ \psi_\la \geq 0 \} \cap B_R(p)  \big )}{ \text{Vol} (B_R(p))} \geq \frac{a_3}{N(\la)^{n-1}}, $$
where $$N(\lambda)= \sup_{x \in X, s \leq \ep_0 \lambda^{-\frac 12}} N(B_s(x)),$$ and $a_3 >0$ depends only on $(X, g)$. Then it is shown that
$$ \text{in}(\Omega_\lambda) \geq a_4 \la^{- \frac12}  \inf _{B \in \mathcal B_\la} \left ( \frac{ \text{Vol} \big ( \{ \psi_\la \geq 0 \} \cap B_R(p)  \big )}{ \text{Vol} (B_R(p))} \right )^{\frac{n-2}{2n}}, $$
where the infimum in taken over the set $\mathcal B_\la$ of  balls $B_R(p)$ such that $ \{ \psi_\la =0 \} \cap B_{R/2}(p) \neq \emptyset$, and $a_4 >0$ depends only on $(X, g)$. Combining the last two inequalities, one obtains
$$\text{in}(\Omega_\lambda) \geq a_5 \la^{- \frac12}  N(\lambda)^{-\frac{(n-1)(n-2)}{2n}}.  $$
Since $s\leq \ep_0 \la^{-1/2} \leq r <10r$, we can use our improved doubling estimates (\ref{Doubling}) to get $N(\lambda) \leq c r \sqrt{\la}$, which implies the theorem easily.

\subsection{Proof of of Corollary \ref{LowerBoundQE}}

This  follows quickly from the following lemma together with Theorem \ref{LowerBoundIR}. 
\begin{lem} \label{QElemma} Let $\{ \psi_j \}_{j \in S}$ be a  sequence of eigenfunctions of $\Delta_g$ with eigenvalues $\{\lambda_j\}_{j \in S}$ such that for some $R_0 >0$, all $r \in (0, R_0)$, and all $x \in X$
\begin{equation} \label{QEonX2}
\int_{B_r(x)} |\psi_{j}|^2 \to \frac{\text{Vol}_g(B_r(x))}{\text{Vol}_g(X)}, \qquad  \lambda_j \xrightarrow{j \in S} \infty.
\end{equation}
Then there exists $r_0(g)$ such that for each $r \in (0, r_0(g))$ there exists $\Lambda_r$ such that for $ \lambda_j \geq \Lambda_r$ we have
$$K_1 r^n\leq \int _{B_{r}(x)} | \psi_{j}|^2 \leq K_2 r^n, $$ uniformly for all $x \in X$. Here, $K_1$ and $K_2$ are independent of $r$, $j$, and $x$. 
 \end{lem}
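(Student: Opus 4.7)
The idea is to upgrade the pointwise-in-$x$ convergence (\ref{QEonX2}) to a uniform-in-$x$ two-sided bound of the correct $r^n$-order by combining compactness of $X$ with a simple sandwich over a finite net. This avoids any Dini-type argument and reduces matters to finitely many invocations of the hypothesis at each scale $r$.

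First, I would use compactness of $X$ and smoothness of $g$ to pick $r_0(g) \in (0, R_0/2)$ and constants $c_1, c_2 > 0$ depending only on $(X, g)$ such that
$$
c_1 r^n \; \leq \; \text{Vol}_g(B_r(x)) \; \leq \; c_2 r^n
$$
for every $x \in X$ and every $r \in (0, 11 r_0(g)/10)$; this step supplies the $r^n$-scaling in the final estimate. Then, for fixed $r \in (0, r_0(g))$, I would choose a maximal $(r/10)$-separated net $\{x_1, \ldots, x_{N(r)}\}$ in $X$, so that the balls $B_{r/10}(x_i)$ cover $X$. For an arbitrary $x \in X$ there is some $x_i$ with $d(x, x_i) < r/10$, and the triangle inequality gives the inclusions
$$
B_{9r/10}(x_i) \; \subset \; B_r(x) \; \subset \; B_{11r/10}(x_i),
$$
and hence the mass sandwich
$$
\int_{B_{9r/10}(x_i)} |\psi_j|^2 \, dv_g \; \leq \; \int_{B_r(x)} |\psi_j|^2 \, dv_g \; \leq \; \int_{B_{11r/10}(x_i)} |\psi_j|^2 \, dv_g.
$$

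Next, I would apply hypothesis (\ref{QEonX2}) at each of the finitely many centers $x_1, \ldots, x_{N(r)}$ and the two radii $9r/10, 11r/10 \in (0, R_0)$. Since this is a finite collection of pointwise limits, there exists $\Lambda_r > 0$ such that for every $\lambda_j \geq \Lambda_r$ and every $i$,
$$
\tfrac{1}{2} \tfrac{\text{Vol}_g(B_{9r/10}(x_i))}{\text{Vol}_g(X)} \; \leq \; \int_{B_{9r/10}(x_i)} |\psi_j|^2 \, dv_g, \qquad \int_{B_{11r/10}(x_i)} |\psi_j|^2 \, dv_g \; \leq \; 2 \tfrac{\text{Vol}_g(B_{11r/10}(x_i))}{\text{Vol}_g(X)}.
$$
Inserting the volume comparisons from the first step yields the claimed bound
$$
K_1 r^n \; \leq \; \int_{B_r(x)} |\psi_j|^2 \, dv_g \; \leq \; K_2 r^n
$$
with $K_1 = c_1 (9/10)^n / (2 \, \text{Vol}_g(X))$ and $K_2 = 2 c_2 (11/10)^n / \text{Vol}_g(X)$, both depending only on $(X, g)$ and hence independent of $r$, $j$, and $x$.

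The only mildly delicate point, and the one to be careful about, is that the threshold $\Lambda_r$ genuinely depends on $r$ because the cardinality $N(r)$ of the net grows as $r \to 0$; but this is exactly the dependence the statement permits, and no uniformity in $r$ of the convergence is needed. The essential mechanism is mass monotonicity through the nearby radii $9r/10$ and $11r/10$, combined with uniform comparability of ball volumes on the compact manifold, which together promote pointwise equidistribution to the required uniform-in-$x$ two-sided estimate.
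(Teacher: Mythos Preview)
Your proof is correct and is precisely the covering argument the paper invokes (the paper merely states that the lemma is obvious for fixed $x$ and that uniformity in $x$ follows from a covering argument as in \cite{He16}, without giving details). Your sandwich $B_{9r/10}(x_i)\subset B_r(x)\subset B_{11r/10}(x_i)$ over a finite $(r/10)$-net, combined with the uniform volume comparison, is exactly the intended mechanism, and your observation that $\Lambda_r$ necessarily depends on $r$ through $N(r)$ is the right caveat.
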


\begin{proof} This lemma is obvious when $x$ is fixed but to show that it holds uniformly in $x$ one can use a covering argument as presented  in  \cite{He16}. 
\end{proof}

\begin{rema} It is clear from Theorem \ref{LowerBoundIR} that if (\ref{SmallScaleAssumption}) holds for $r=\la^{-\frac 12 + \ep}$ for arbitrary $\ep>0$, then we have  $${a_1(\ep)}{ \la^{-\frac{1}{2}-\ep }} \leq \text{in}( \Omega_\la ) \leq {a_2} {\la^{-\frac{1}{2}}}, $$ for any $\ep>0$. It is a natural, but seemingly very difficult to prove, conjecture that $r=\la^{-\frac 12 + \ep}$ is the optimal rate for the eigenfunctions of negatively curved manifolds. A result of \cite{LeRu} shows that this optimal rate of shrinking is satisfied for a full density subsequence of any ONB of eigenfunctions of the flat $2$-torus  $\mathbb T^2$. 

\end{rema}

\begin{rema} We have used both local and global harmonic analysis of eigenfunctions. The local analysis is contained in the work of Mangoubi, where lower bounds for the inner radius in terms of the doubling index are given. The global analysis is contained in the small scale QE results of \cite{Han15, HeRi} and also the QE results of \cite{Sh, CdV, Ze87}, where the global behavior of the geodesic flow and the wave operator are considered to obtain equidistribution on small balls.  Hence, any non-optimal results that give improvements on the results of \cite{Ma2} and is only based on the local analysis of eigenfunctions can be improved using our hybrid method. 

\end{rema}

\begin{rema} The upper bound in (\ref{Bounds}) is an immediate consequence of a  result of Br\"uning \cite{B}, which says that there exists $a_2 >0$ dependent only on $(X, g)$ such that every geodesic ball of radius $a_2 \lambda^{-1/2}$ contains a zero of $\psi_\lambda$. See \cite{Ze08} for a simple proof using the domain monotonicity  property of Dirichlet eigenvalues.  

\end{rema}

\begin{rema} The paper \cite{GeMa} gives a refinement of the result of Mangoubi using Brownian motions techniques. To be precise, the authors show that the inscribed ball that gives the lower bound of \cite{Ma2} can be centered at the maximum of the eigenfunctions $\psi_\la$ on its nodal domain $\Omega_\la$.   
\end{rema}

\section*{Acknowledgments} We are grateful to Steve Zelditch for encouraging us to write this note.


\begin{thebibliography}{HHHH}







\bibitem[Br78]{B} J. Br\"uning \emph{\"Uber Knoten Eigenfunktionen des Laplace-Beltrami Operators}. Math. Z. \textbf{158}
(1978), 15--21.





\bibitem[CdV85]{CdV} Y. Colin de Verdi\`ere \emph{Ergodicit\'e et fonctions propres du Laplacien}, Comm. in Math. Phys. \textbf{102}, 497--502 (1985). 


\bibitem[DoFe88]{DF1} H. Donnelly and C. Fefferman,
\emph{Nodal sets of eigenfunctions on Riemannian manifolds}, 
Invent. Math. \textbf{93} (1988), no. 1, 161--183.





\bibitem[GeMa16]{GeMa} B. Georgiev and M. Mukherjee \emph{Nodal geometry, heat diffusion and Brownian motion}, 2016, arXiv: 1602.07110. 




\bibitem[Han15]{Han15} X. Han \emph{Small scale quantum ergodicity on negatively curved manifolds}, Nonlinearity (2015), no. 9, 3262--3288.







\bibitem[He16]{He16} H. Hezari \emph{Applications of small scale quantum ergodicity in nodal sets}, 2016, arXiv: 1606.02057. 

\bibitem[HeRi16]{HeRi} H. Hezari and G. Rivi\`ere \emph{$L^p$ norms, nodal sets, and quantum ergodicity},  Adv. Math. \textbf{290} (2016), 938--966.







\bibitem[LeRu16]{LeRu} S. Lester and Z. Rudnick \emph{Small scale equidistribution of eigenfunctions on the torus}, 2016, arXiv: 1508.01074. 







\bibitem[Ma08a]{Ma1} D. Magoubi \emph{On the inner radius of a nodal domain}, Canad. Math. Bull. \textbf{51} (2008), no. 2, 249--260.


\bibitem[Ma08b]{Ma2} D. Mangoubi \emph{Local asymmetry and the inner radius of nodal domains},  Comm. Partial Differential Equations \textbf{33} (2008), no. 7-9, 1611--1621.



\bibitem[Sh74]{Sh} A. Shnirelman \emph{Ergodic properties of eigenfunctions}, Usp. Math. Nauk. \textbf{29} (1974), 181--182.

















\bibitem[Ze87]{Ze87} S.~Zelditch \emph{Uniform distribution of the eigenfunctions on compact hyperbolic surfaces}, Duke Math. Jour. \textbf{55}, 919--941 (1987)



\bibitem[Ze08]{Ze08} S. Zelditch \emph{Local and global analysis of eigenfunctions}. A survey on eigenfunctions of the Laplacian on Riemannian manifolds. Advanced Lectures in Mathematics (ALM) 7, 545--658 (2008), arXiv:0903.3420. 





\end{thebibliography}
\end{document}